\newcommand{\R}{\mathbb{R}}
\newcommand{\one}{\mathbbm{1}} 
\newtheorem{thm}{Theorem}
\newtheorem{defn}{Definition}
\newtheorem{prop}{Proposition}
\newtheorem{lem}{Lemma}
\newtheorem{clm}{Claim}
\newcommand{\set}[1]{ \left\{ #1 \right\} }
\def\reals{\mathbb R}
\def\scriptl{\mathcal L}
\def\scripto{\mathcal O}
\def\scriptc{\mathcal C}
\def\naturals{\mathbb N}
\begin{document}

\address{
        Department of Mathematics\\
        University of California \\
        Berkeley, CA 94720-3840, USA}
\email{mchrist@berkeley.edu}
\email{taryn.flock@berkeley.edu}

\thanks{The authors were supported in part by NSF grant DMS-0901569.}

\title [Cases of Equality] {Cases of Equality \\ in Certain Multilinear Inequalities
\\ of Hardy-Riesz-Brascamp-Lieb-Luttinger Type}  
\author{ Michael Christ and Taryn C. Flock }

\date{August 20, 2013}

\maketitle

\begin{abstract}
Cases of equality in certain Hardy-Riesz-Brascamp-Lieb-Luttinger
rearrangement  inequalities  are characterized.
\end{abstract}

\begin{section}{Statement of result}

Let $m\ge 2$ and $n\ge m+1$ be positive integers.  For $j\in\set{1,2,\cdots,n}$
let $E_j\subset \R$  be Lebesgue measurable sets with positive, finite measures, 
and let $L_j$ be surjective linear maps $\R^m\to \R$. 
This paper is concerned with the nature of those $n$--tuples 
$(E_1,\cdots,E_n)$ of measurable sets that maximize expressions
\[I(E_1,\cdots,E_n)=\int_{\R^m}\prod_{j=1}^{n}\one_{E_j}(L_j(x))\,dx,\]
among all $n$--tuples with specified Lebesgue measures $|E_j|$.
Our results apply only in the lowest-dimensional nontrivial case, $m=2$, but apply for arbitrarily large $n$.

\begin{defn} 
A family $\set{L_j}$ of surjective linear mappings from $\reals^m$ to $\reals^1$
is nondegenerate  if for every set $S\subset\set{1,2,\cdots,n}$ of cardinality $m$,
the map $x\mapsto (L_j(x): j\in S)$ from $\reals^m$ to $\reals^S$ is a bijection.
\end{defn}


For any Lebesgue measurable set $E\subset\reals^1$ with finite Lebesgue measure,
$E^*$ denotes the nonempty closed\footnote{A more common convention is that $E^*$ should be open,
but this convention will be convenient in our proofs. If $E=\emptyset$ then $E^*=\set{0}$,
rather than the empty set, under our convention.}   interval centered at the origin satisfying $|E|=|E^*|$. 
Brascamp, Lieb, and Luttinger \cite{BLL} proved that among sets with specified measures,
the functional $I$ attains its maximum value when each $E_j$ equals $E_j^*$, that is,
\begin{equation} \label{mainIQ} I(E_1,\cdots,E_n)\leq  I(E_1^*,\cdots,E_n^*).  \end{equation} 
In this paper we study the uniqueness question and show that these are the only maximizing $n$--tuples, 
up to certain explicit symmetries of the functional,
in those situations in which a satisfactory characterization of maximizers can exist.


Inequalities of this type can be traced back at least to Hardy and to Riesz \cite{riesz}.
In the 1930s, Riesz and Sobolev independently showed that
\begin{equation*}
\iint_{\reals^k\times\reals^k} \one_{E_1}(x)\one_{E_2}(y)\one_{E_3}(x+y)\,dx\,dy
\le \iint_{\reals^k\times\reals^k} \one_{E_1^*}(x)\one_{E_2^*}(y)\one_{E_3^*}(x+y)\,dx\,dy
\end{equation*}
for arbitrary measurable sets $E_j$ with finite Lebesgue measures.
Brascamp, Lieb, and Luttinger \cite{BLL} later proved the more general result indicated above,
and in a yet more general form in which the target spaces $\reals^1$ are replaced by
$\reals^k$ for arbitrary $k\ge 1$, satisfying an appropriate equivariance hypothesis.

The first inverse theorem in this context, characterizing cases of equality, was established by Burchard
\cite{burchard}, \cite{burchardthesis}. The cases $n\le m$ are uninteresting, since 
$I(E_1,\cdots,E_n)=\infty$ for all $(E_1,\cdots,E_n)$ when $n<m$, and equality holds for all sets when $n=m$.
The results of Burchard \cite{burchardthesis} apply to the smallest nontrivial value of $n$ for given $m$, 
that is to $n=m+1$, but not to larger $n$. We are aware of no further progress in this direction since that time. 
This paper treats a situation at the opposite extreme of the spectrum
of possibilities, in which $m=2$ is the smallest dimension of interest, but the number
$n\ge 3$ of factors can be arbitrarily large.

Burchard's inverse theorem has more recently been applied to characterizations
of cases of equality in certain inequalities for the Radon transform and its generalizations
the $k$--plane transforms \cite{christradon},\cite{flockkplane}.
Cases of near but not exact equality for the Riesz-Sobolev inequality have been characterized still more recently
\cite{christRS1},\cite{christRS2}.

As was pointed out by Burchard \cite{burchard}, a satisfactory characterization of cases of
equality is possible only if no set $E_i$ is too large relative to the others.
This is already apparent for the trilinear expression associated to convolution,
\[I(E_1,E_2,E_3) =\iint \one_{E_1}(x)\one_{E_2}(y)\one_{E_3}(x+y)\,dx\,dy;\]
if $|E_3|>|E_1|+|E_2|$ and if $E_1,E_2$ are intervals, then equality holds
whenever $E_3$ is the union of an arbitrary measurable set with the algebraic sum of those two intervals.
 
Consider any expression $I(E_1,\cdots,E_n)$ where the integral is taken over $\reals^m$,
$E_j\subset\reals^1$, and $L_j:\reals^m\to\reals^1$ are linear and surjective.
Set $S_j=\{x\in\R^m:L_j(x)\in E_j\}$. Then $I(E_1,\cdots,E_n)$ is equal to the $m$--dimensional
Lebesgue measure of $\cap_j S_j$.
Define also 
\begin{equation} S_j^\star=\{x\in\R^m:L_j(x)\in E_j^*\}.  \end{equation}

\begin{defn}\label{defn:admissible} 
Let $(L_j: 1\le j\le n)$ be an $n$-tuple of surjective linear mappings from $\reals^m$ to $\reals$.
An $n$--tuple $(E_j: 1\le j\le n)$  of subsets of $\reals^1$
is admissible relative to $(L_j)$ if each $E_j$ is Lebesgue measurable and satisfies $0<|E_j|<\infty$,
and if there exists no index $k$ such that $S_k^\star$ contains an open neighborhood of $\bigcap_{j\neq k}S_j^\star$.  

$(E_j)$  is strictly admissible relative to $(L_j)$ if each set $E_j$ is Lebesgue measurable, $0<|E_j|<\infty$ for all $j$,
and there exists no index $k$ such that $S_k^\star$  contains $\bigcap_{j\neq k}S_j^\star$.
\end{defn}

Once the maps $L_j$ are specified,
admissibility of $(E_1,\dots,E_n)$ is a property only of the $n$--tuple of measures $(|E_1|,\dots,|E_n|)$. 
Its significance is easily explained.  Suppose that $(e_1,\cdots,e_n)$ is a sequence of
positive numbers such that an $n$-tuple of sets with these measures is not admissible.
The sets $E_j^*,S_j^\star$ are determined by $e_j$.
Choose an index $k$ such that $S_k^\star\supset\cap_{j\ne k}S_j^\star$.
For $j\ne k$ set $E_j=E_j^*$.
Choose the unique closed interval $I$ centered at $0$ such that the strip $S =\set{x: L_k(x)\in I}$
contains $\cap_{j\ne k} S_j^\star$, but $|I|$ is as small as possible among all such intervals. 
Choose $E_k$ to be the disjoint union of $I$ with an arbitrary set of measure
$|E_k|-|I|$. Then $I(E_1,\cdots,E_n) = I(E_1^*,\cdots,E_n^*)$, yet $E_k\setminus I$ is an artibrary set of
the specified measure.  Thus without admissibility, extremizing $n$-tuples are highly nonunique.


Admissibility and strict admissibility manifestly enjoy
the following invariance property. Let $\Phi$ be an affine automorphism of $\reals^m$,
and for $j\in\set{1,2,\cdots,n}$ let $\Psi_j$ be affine automorphisms of $\reals^1$.
Each composition $\Psi_j\circ L_j\circ\Phi$ is an affine mapping from $\reals^m$ to $\reals^1$.
Write $\Psi_j\circ L_j\circ\Phi(x) = \tilde L_j(x)+a_j$
where $\tilde L_j:\reals^m\to\reals^1$ is linear.
Define 
$\tilde E_j = \Psi_j(E_j)$ for all $j$.
Then $(E_j: 1\le j\le n)$ is admissible relative to $(L_j: 1\le j\le n)$
if and only if 
$(\tilde E_j: 1\le j\le n)$ is admissible relative to $(\tilde L_j: 1\le j\le n)$.
Strict admissibility is invariant in the same sense.

$A\bigtriangleup B$ will denote the symmetric difference of two sets.
$|E|$ will denote the Lebesgue measure of a subset of either $\reals^1$ or $\reals^2$.
We say that sets $A,B$ differ by a null set if $|A\bigtriangleup B|=0$.

The following theorem, our main result, characterizes cases of equality, in the situation in which $I(E_1,\cdots,E_n)$
is defined by integration over $\reals^2$ and $E_j\subset\reals^1$.

\begin{thm}\label{thm:main} Let $n\geq3$. 
Let $(L_i: 1\le i\le n)$ be a nondegenerate $n$-tuple of surjective linear maps  $L_i:\reals^2\to\reals^1$.
Let $(E_i: 1\le i\le n)$ be an admissible $n$--tuple of Lebesgue measurable subsets of $\reals^1$.
If $I(E_1,\cdots,E_n)=I(E_1^*,\cdots,E_n^*)$  then there exist a point $z\in\reals^2$,
and for each index $i$ an interval $J_i\subset\reals$,
such that $|E_i\bigtriangleup J_i|=0$ and the center point of $J_i$ equals $L_i(z)$. 
Conversely, $I(E_1,\cdots,E_n)=I(E_1^*,\cdots,E_n^*)$ in all such cases.
\end{thm}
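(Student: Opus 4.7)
\emph{Proof plan.} For the converse direction, if $|E_i\bigtriangleup J_i|=0$ with each $J_i$ an interval centered at $L_i(z)$, then since the preimage of a $1$-dimensional null set under the surjective linear map $L_i$ is a $2$-dimensional null set, $I(E_1,\ldots,E_n)=I(J_1,\ldots,J_n)$; the substitution $x\mapsto x+z$ then replaces each $J_i$ by $J_i-L_i(z)$, a symmetric interval of measure $|E_i|$, which is $E_i^*$. Hence $I(E_1,\ldots,E_n)=I(E_1^*,\ldots,E_n^*)$.

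For the forward direction I plan to argue by induction on $n\ge 3$. The base case $n=3$ reduces to Burchard's theorem: nondegeneracy permits an affine change of variables on $\R^2$ and on each target $\R^1$ bringing $(L_1,L_2,L_3)$ to the canonical form $((x,y)\mapsto x,\,(x,y)\mapsto y,\,(x,y)\mapsto x+y)$, converting $I(E_1,E_2,E_3)=I(E_1^*,E_2^*,E_3^*)$ into equality in the Riesz-Sobolev convolution inequality. Under admissibility, Burchard's inverse theorem \cite{burchardthesis} concludes that each $E_i$ agrees a.e.\ with an interval $J_i$ of center $c_i$ satisfying $c_3=c_1+c_2$; the point $z=(c_1,c_2)$ then verifies $L_i(z)=c_i$ for $i=1,2,3$.

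For the inductive step $n\ge 4$, the plan is to reduce to the $(n-1)$-case by slicing. Fix $k$; by the coarea formula,
\[
I(E_1,\ldots,E_n)=c_k\int_{\R}\one_{E_k}(t)\,g_k(t)\,dt,
\]
where $g_k(t)$ is the $1$-dimensional measure of $L_k^{-1}(t)\cap\bigcap_{j\ne k}S_j$; parametrizing the line turns this slice into an intersection of $n-1$ affine translates of the $E_j$, $j\ne k$, in $\R^1$. The analogous cross section $g_k^\star(t)$ for the symmetrized $n$-tuple is the $L_k$-slice function of the convex centrally symmetric body $\bigcap_{j\ne k}S_j^\star$, hence is even and concave on its support (Brunn-Minkowski), so symmetric decreasing; admissibility ensures its positivity on $[-|E_k|/2,|E_k|/2]$. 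Combining $I(E_1,\ldots,E_n)=I(E_1^*,\ldots,E_n^*)$ with the BLL inequality for the $(n-1)$-subtuple $(E_j)_{j\ne k}$ (which yields $\int g_k\le\int g_k^\star$) and the one-dimensional Hardy-Littlewood inequality applied to the pair $(\one_{E_k},g_k)$, one should derive (i) equality in BLL for the subtuple $(E_j)_{j\ne k}$, and (ii) $E_k$ agrees a.e.\ with an interval $J_k$ of some center $c_k$. The inductive hypothesis applied to $(E_j)_{j\ne k}$ then yields $z^{(k)}\in\R^2$ with $L_j(z^{(k)})=c_j$ for $j\ne k$; nondegeneracy (any two $L_j$ are linearly independent) forces $z^{(k)}$ to be the same for every $k$, producing the common $z$ satisfying $L_i(z)=c_i$ for all $i$.

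The main obstacle is items (i)-(ii) above, namely extracting from the saturated chain of inequalities both equality for the $(n-1)$-subtuple and intervality of the distinguished set $E_k$. The admissibility hypothesis should play a key role by excluding plateau configurations of $g_k^\star$ at the relevant level that would otherwise permit $E_k$ to deviate from a correctly centered interval without decreasing $I$.
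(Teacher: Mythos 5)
Your converse direction and base case $n=3$ are correct and coincide with the paper's. The gap is in the inductive step, at exactly the point you flag as ``the main obstacle,'' and it is not a technical loose end but the place where the whole difficulty of the theorem lives. Writing $I(E_1,\ldots,E_n)=c_k\int\one_{E_k}(t)\,g_k(t)\,dt$ and $I(E_1^*,\ldots,E_n^*)=c_k\int\one_{E_k^*}(t)\,g_k^\star(t)\,dt$, saturation of your chain gives only $\int_{E_k}g_k=\int_{E_k^*}g_k^\star$ together with equality in Hardy--Littlewood for the pair $(\one_{E_k},g_k)$. Your item (i) --- equality in the rearrangement inequality for the subtuple $(E_j)_{j\ne k}$ --- would require $\int_\R g_k=\int_\R g_k^\star$, and this does not follow: when the $n$-tuple is strictly admissible at index $k$, the support of $g_k^\star$ (essentially the $L_k$-image of the convex body $\cap_{j\ne k}S_j^\star$) is a centered interval strictly longer than $E_k^*$, so $\int_{E_k^*}g_k^\star<\int_\R g_k^\star$, and equality of the restricted integrals is perfectly compatible with strict inequality of the total integrals. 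Without (i) you cannot invoke the inductive hypothesis, and then (ii) collapses as well: equality in Hardy--Littlewood only sandwiches $E_k$ between superlevel sets of $g_k$, and until you know the other $E_j$ are intervals you have no control over whether those superlevel sets are intervals or whether $g_k$ has a plateau at the relevant level.

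The paper closes exactly this gap with a deformation device absent from your outline. Lemma~\ref{Tok} produces $\bar r>0$ such that the symmetric truncations $E_j(\bar r)$, $j\ne k$, form a tuple that is still admissible but no longer strictly admissible, with $S_k^\star\supset\cap_{j\ne k}S_j^\star(\bar r)$; Lemma~\ref{RL}, applied fiber by fiber, shows the truncation costs exactly $\bar r$ on each slice for both the original and the symmetrized configurations, so equality in \eqref{mainIQ} survives the truncation. Only after this reduction to the non-strictly-admissible case does one get $I(E_k,\ldots)\le I(\R,(E_j)_{j\ne k})\le I(\R,(E_j^*)_{j\ne k})=I(E_k^*,(E_j^*)_{j\ne k})$, which is precisely the subtuple equality $\int_\R g_k=\int_\R g_k^\star$ you need; the induction hypothesis then makes the truncated $E_j$, $j\ne k$, intervals, $g_k$ becomes continuous, even, nonincreasing on $[0,\infty)$ with support exactly $E_k^*$ (this is where admissibility kills the plateau issue), and the Hardy--Littlewood step finally identifies $E_k$ as an interval. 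Your slicing framework is compatible with this, but without the truncation mechanism (or an equivalent one) the induction does not close.
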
 

We conjecture that Theorem~\ref{thm:main} extends to arbitrary $m\ge 2$. 

The authors thank Ed Scerbo for very useful comments and copious suggestions regarding the exposition.
\end{section}

\begin{section}{On admissibility conditions }

For maps $L_j$ from $\reals^m$ to the simplest target space $\reals^1$, which is the subject of this paper, 
the most general case treated by Burchard \cite{burchardthesis} concerns
\begin{equation} \label{burchardform} 
\int_{\reals^m} \one_{E_0}(x_1+x_2+\cdots+x_m)\prod_{j=1}^m \one_{E_j}(x_j)\,
dx_1\cdots dx_m, \end{equation} 
where $m$ is any integer greater than or equal to $2$.
Cases of equality are characterized under the admissibility condition 
\begin{equation} \label{badmissible}
|E_i|\le \sum_{j\ne i}|E_j|\ \text{ for all $i\in\set{0,1,2,\cdots,m}$.} \end{equation}
Strict admissibility is the same condition, with inequality replaced by strict inequality for all $i$. 
This single case subsumes many cases, in light of the invariance property discussed above.


\begin{lem} \label{AEQ}
For the expression \eqref{burchardform},
admissibility in the sense \eqref{badmissible} is equivalent to admissibility 
in the sense of Definition~\ref{defn:admissible}.
Likewise, the two definitions of  strict admissibility are mutually equivalent.
\end{lem}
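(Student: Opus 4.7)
The plan is to reduce the equivalence to a direct computation by making the sets $S_j^\star$ explicit. For the expression \eqref{burchardform} the indexing runs over $\{0,1,\ldots,m\}$, so the paper's $n$ equals $m+1$ here. Writing $r_j=|E_j|/2$, I have $S_j^\star=\{x\in\reals^m:|x_j|\le r_j\}$ for $j\ge 1$ and $S_0^\star=\{x\in\reals^m:|x_1+\cdots+x_m|\le r_0\}$; each is a closed slab whose interior is obtained by replacing $\le$ with $<$.

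The central observation is that for a closed slab $S\subset\reals^m$ and any set $A$, $S$ contains an open neighborhood of $A$ if and only if $A\subset\operatorname{int}(S)$. Fix $k$ and set $A_k=\bigcap_{j\ne k}S_j^\star$. Then $S_k^\star\supset A_k$ is equivalent to the linear functional defining $S_k^\star$ being bounded by $r_k$ in absolute value on $A_k$, and $S_k^\star$ containing an open neighborhood of $A_k$ is the corresponding strict bound. Since $A_k$ is a compact convex polytope, the supremum of any continuous function on $A_k$ is attained, so the two conditions differ only in $\le$ versus $<$.

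A short computation yields $\sup_{x\in A_k}\ell_k(x)=\sum_{i\ne k}r_i$, where $\ell_k(x)=|x_k|$ for $k\ge 1$ and $\ell_0(x)=|x_1+\cdots+x_m|$. For $k=0$ the set $A_0$ is the box $\prod_{j\ge 1}[-r_j,r_j]$, on which the supremum is attained at a vertex. For $k\ge 1$, any $x\in A_k$ satisfies $x_k=(x_1+\cdots+x_m)-\sum_{j\ne k,\,j\ge 1}x_j$, hence $|x_k|\le r_0+\sum_{j\ne k,\,j\ge 1}r_j=\sum_{i\ne k}r_i$, with equality realized by taking $x_j=-r_j$ for $j\ne k,\ j\ge 1$ and $x_k=\sum_{i\ne k}r_i$; this point lies in $A_k$ because $x_1+\cdots+x_m=r_0$.

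Therefore $S_k^\star\supset A_k$ if and only if $|E_k|\ge\sum_{i\ne k}|E_i|$, and $S_k^\star$ contains an open neighborhood of $A_k$ if and only if $|E_k|>\sum_{i\ne k}|E_i|$. Negating over $k$, admissibility in the sense of Definition~\ref{defn:admissible} becomes $|E_k|\le\sum_{i\ne k}|E_i|$ for all $k$, which is \eqref{badmissible}, and strict admissibility becomes the corresponding strict inequality for every $k$. No step is a serious obstacle; the only subtlety is verifying that the extrema of $\ell_k$ on $A_k$ are genuinely attained, which is what makes ``contains'' and ``contains an open neighborhood of'' correspond precisely to the non-strict and strict versions of Burchard's condition.
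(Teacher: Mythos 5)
Your proof is correct and follows essentially the same route as the paper's: write the sets $S_j^\star$ as explicit slabs and check the inclusions $S_k^\star\supset\bigcap_{j\ne k}S_j^\star$ (resp.\ containment of an open neighborhood) directly, which reduces to comparing $|E_k|$ with $\sum_{i\ne k}|E_i|$. You merely make explicit a point the paper leaves implicit, namely that the relevant supremum is attained on the compact polytope, so that the strict and non-strict versions match up exactly.
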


\begin{proof}
$S_0^\star=\{x: |\sum_{j=1}^n x_j|\le \tfrac12|E_0|\}$,
while for $j\ge 1$, $S_j^\star = \set{x: |x_j|\le \tfrac12 |E_j|}$.
Thus $|E_0|\ge\sum_{j=1}^n |E_j|$ if and only if 
\[S_0^\star \supset\set{x: |x_j|\le \tfrac12 |E_j|\ \text{ for all } 1\le j\le n}
=\cap_{j=1}^n S_j^\star.\]
Likewise, strict inequality is equivalent to inclusion of $\cap_{j=1}^n S_j^\star$ in the interior of $S_0^\star$.

For any $i\in\set{1,\cdots,n}$,
\[\cap_{j\ne i} S_j^\star = \set{x: |x_k|\le \tfrac12|E_k| \ \text{ for all }\ 
k\ne i\in\set{1,2,\cdots,n}} \bigcap \big\{x: |\sum_{l=1}^n x_j|\le\tfrac12 |E_0|\big\}\]
while
\[ S_i^\star =\set{x: |x_i|\le\tfrac12|E_i|}.\]
Therefore $|E_i|\ge \sum_{0\le j\ne i}|E_j|$ if and only if $S_i^\star\supset \cap_{0\le j\ne i} S_j^\star$,
and strict inequality is equivalent to inclusion of $\cap_{0\le j\ne i} S_j^\star$ in the interior of $S_i^\star$.
\end{proof}

The case $m=2$, $n=3$ of Theorem~\ref{thm:main} says nothing new.
Indeed, let $(L_j: 1\le j\le 3)$
be a nondegenerate family of linear transformations from $\reals^2$ to $\reals^1$. By making a
linear change of coordinates in $\reals^2$ we can make $L_1(x,y)\equiv x$ and $L_2(x,y)\equiv y$,
so that
\[I(E_1,E_2,E_3) = c\int_{\reals^2} \one_{E_1}(x)\one_{E_2}(y)\one_{E_3}(ax+by)\,dx\,dy\]
where $a,b$ are both nonzero. This equals
\[ c'\int_{\reals^2} \one_{E_1}(x/a)\one_{E_2}(y/b)\one_{E_3}(x+y)\,dx\,dy
= c'\int_{\reals^2} \one_{\tilde E_1}(x)\one_{\tilde E_2}(y)\one_{E_3}(x+y)\,dx\,dy \]
where $\tilde E_j$ are appropriate dilates and reflections of $E_j$.

We will need the following simple result concerning the stability of strict admissibility. 
\begin{lem}
Let $(L_j: 1\le j\le n)$ be a nondegenerate family of surjective linear mappings from $\reals^m$ to $\reals^1$. 
Let $(E_1,\cdots,E_n)$ be a strictly admissible $n$-tuple of Lebesgue measurable subsets of $\reals^1$.
There exists $\varepsilon>0$ such that any 
$n$-tuple $(E_1,\cdots,E_n)$ of Lebesgue measurable subsets of $\reals^1$ satisfying $\big|\,|E_j|-|F_j| \,\big|<\varepsilon$ for 
all $j\in\set{1,2,\cdots,n}$ is strictly admissible.
\end{lem}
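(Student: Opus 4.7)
The plan is to exploit the fact that admissibility depends only on the vector of measures $(|E_1|,\dots,|E_n|)$ and that each closed strip $S_j^\star$ scales linearly with $|E_j|$. (I will read the statement as asserting stability under perturbation of some reference strictly admissible tuple, which I rename $(F_1,\ldots,F_n)$ to distinguish it from the perturbed tuple called $(E_1,\ldots,E_n)$.) By strict admissibility of $(F_j)$, for each $k\in\set{1,\ldots,n}$ there exists $x_k\in\reals^m$ with $|L_j(x_k)|\le\tfrac12|F_j|$ for every $j\ne k$ and $|L_k(x_k)|>\tfrac12|F_k|$. Set $\alpha_k=|L_k(x_k)|-\tfrac12|F_k|>0$, $\alpha=\min_k\alpha_k>0$, $m_*=\min_j|F_j|$, and $M_*=\max_j|F_j|$; these are fixed positive constants determined by the reference tuple.

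Given any measurable $(E_1,\ldots,E_n)$ with $\big|\,|E_j|-|F_j|\,\big|<\varepsilon$ for all $j$ (and $\varepsilon<m_*/2$, so every $|E_j|>0$), I would define for each $k$ the scalar $t_k=\min_{j\ne k}\bigl(|E_j|/|F_j|\bigr)$ and the candidate point $y_k=t_k x_k$. By construction $t_k>0$ and $|L_j(y_k)|=t_k|L_j(x_k)|\le t_k\cdot\tfrac12|F_j|\le\tfrac12|E_j|$ for every $j\ne k$, so $y_k\in\bigcap_{j\ne k}S_j^\star$ for the perturbed tuple. It therefore suffices to verify $|L_k(y_k)|>\tfrac12|E_k|$, for this produces a point in $\bigcap_{j\ne k}S_j^\star$ not in $S_k^\star$ and rules out the single obstruction to strict admissibility at index $k$.

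The verification is a short estimate. Since $t_k\ge 1-\varepsilon/m_*$ and $|E_k|\le|F_k|+\varepsilon$,
\[
|L_k(y_k)|-\tfrac12|E_k|
=\tfrac12(t_k-1)|F_k|+t_k\alpha_k+\tfrac12\bigl(|F_k|-|E_k|\bigr)
\ge \alpha - C\varepsilon,
\]
where $C=\tfrac12 M_*/m_*+\alpha/m_*+\tfrac12$ depends only on the reference data. Taking $\varepsilon<\min(m_*/2,\,\alpha/C)$ gives the desired strict inequality simultaneously for every $k$, completing the proof.

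I do not anticipate any real obstacle: the only analytic content is that the closed polytope $\bigcap_{j\ne k}S_j^\star$ depends continuously on the defining measures, and the origin-centered scaling $x\mapsto tx$ furnishes this continuity in a form already compatible with the admissibility condition, avoiding any need for a general Hausdorff-metric argument. The one small point to keep straight is that the scaling factor $t_k$ must be chosen slightly less than $1$ (not greater) so that the $j\ne k$ constraints are preserved uniformly, while the $k$-th constraint has enough slack $\alpha_k$ to absorb the resulting loss.
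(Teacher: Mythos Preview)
Your argument is correct. The construction $y_k=t_k x_k$ with $t_k=\min_{j\ne k}|E_j|/|F_j|$ does exactly what is needed: the inequality $t_k|F_j|\le |E_j|$ for $j\ne k$ places $y_k$ in the perturbed polytope $\bigcap_{j\ne k}S_j^\star$, and your displayed lower bound is a valid estimate giving $|L_k(y_k)|>\tfrac12|E_k|$ once $\varepsilon<\min(m_*/2,\alpha/C)$. One cosmetic point: your closing remark that $t_k$ ``must be chosen slightly less than $1$'' is a description of the worst case rather than a requirement---your formula allows $t_k\ge 1$ when all $|E_j|\ge|F_j|$ for $j\ne k$, and the estimate goes through unchanged.

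The paper argues differently, by contradiction and soft continuity. It supposes that for each $\nu$ there is a tuple $(E_{j,\nu})$ with $|E_{j,\nu}|\to|F_j|$ that is not strictly admissible, picks for each $\nu$ a witnessing index $J(\nu)$ with $S_{J(\nu),\nu}^\star\supset\bigcap_{j\ne J(\nu)}S_{j,\nu}^\star$, passes to a subsequence with $J(\nu)\equiv J$ constant, and then uses that the closed strips depend continuously on the measures to conclude $S_J^\star\supset\bigcap_{j\ne J}S_j^\star$, contradicting strict admissibility of the reference tuple. Your route is more explicit: it produces a concrete $\varepsilon$ from the data $(\alpha,m_*,M_*)$ and avoids any limiting argument. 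The paper's route is shorter to state and sidesteps the bookkeeping, at the cost of being non-constructive; either is perfectly adequate for the use made of the lemma later.
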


\begin{proof} 
Suppose that no $\varepsilon$ satisfying the conclusion exists.
Then there exists a sequence of $n$-tuples $((E_{j,\nu}): \nu\in\naturals)$ such that $|E_{j,\nu}|\to |E_j|$
as $\nu\to\infty$, for each $j\in\set{1,2,\cdots,n}$, and such that for each $\nu\in\naturals$,
$(E_{n,\nu}: 1\le j\le n)$ is not admissible.

Let $E_{j,\nu}^*\subset\reals^1$ be the associated closed intervals centered at $0$.
Let \[S_{j,\nu}^\star=\set{x\in\reals^m: L_j(x)\in E_{j,\nu}^*}\] be the associated closed strips.
The failure of strict admissibility means that
for each $\nu$ there exists $J(\nu)$ such that $S_{J(\nu),\nu}^\star\supset \cap_{j\ne J(\nu)} S_{j,\nu}^\star$.
By passing to a subsequence we may assume that $J(\nu)\equiv J$ is independent of $\nu$.

Since $|E_{j,\nu}|\to|E_j|$, the closed strips $S_{j,\nu}^\star$  converge to the closed strips $S_j^\star$
as $\nu\to\infty$, in such a way that it follows immediately that $S_J^\star\supset \cap_{j\ne J}S_j^\star$.
Therefore $(E_1,\cdots,E_n)$ is not strictly admissible.
\end{proof}

\end{section}

\begin{section}{Truncation}

\begin{defn} 
Let $E\subset\reals^1$ have finite measure.
Let $\alpha,\beta>0$. If $\alpha+\beta \le |E|$ then
the truncation $E(\alpha,\beta)$ of $E$ is 
\begin{equation} E(\alpha,\beta)=E\cap[a,b]\end{equation}
where $a,b\in\reals$ are respectively the minimum and the maximum real numbers that satisfy
$$ |E\cap (-\infty,a]|=\alpha \text{  and  } |E\cap[b,\infty)|=\beta .$$
\end{defn}
In the degenerate case in which $\alpha+\beta=|E|$, $E(\alpha,\beta)$ has Lebesgue measure equal to zero, 
and may be empty or nonempty.
According to our conventions, $E(\alpha,\beta)^*=\set{0}$ in this circumstance, in either case.
This convention will be convenient below.

\begin{lem} \label {RL} 
Let $k\ge 1$.
Let $\set{E_i: i\in\{1,2,\cdots, k\}}$
be a finite collection of Lebesgue measurable subsets of $\reals^1$ 
with positive, finite Lebesgue measures.  
Let $\alpha,\beta > 0$, and suppose that $|E_i|\ge \alpha+\beta$ for each index $i$. 
If $\cap_{i=1}^k E_i(\alpha,\beta)\ne\emptyset$ then
\begin{equation} \label{rieszvariant} \int_\R\prod_{i=1}^k \one_{E_i}(y)\,dy\leq \alpha+\beta 
+\int_\R\prod_{i=1}^k\one_{{E}_i(\alpha,\beta)}(y)\,dy. \end{equation}
If $E_i$ are closed intervals and if $\cap_{i=1}^k E_i(\alpha,\beta)\ne\emptyset$
then equality holds in inequality \eqref{rieszvariant}.
\end{lem}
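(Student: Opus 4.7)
The plan is a direct set-theoretic decomposition based on the geometry of the truncations. Write $G_i = E_i(\alpha,\beta) = E_i \cap [a_i, b_i]$, where $a_i, b_i$ are the truncation endpoints, so that by construction $|E_i \cap (-\infty,a_i)| = \alpha$ and $|E_i \cap (b_i,\infty)| = \beta$. I would introduce the envelope endpoints $A = \max_i a_i$ and $B = \min_i b_i$; the hypothesis $\cap_i G_i \ne \emptyset$ forces $A \le B$.

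The heart of the proof is the identity
\[ \bigcap_i G_i \;=\; \Bigl(\bigcap_i E_i\Bigr) \cap [A, B], \]
which I would verify by a short two-sided inclusion. The left side lies in both $\cap_i E_i$ and $\cap_i [a_i,b_i] = [A,B]$, while any $y$ in the right side satisfies $a_i \le A \le y \le B \le b_i$ for every $i$ and so lies in each $G_i = E_i \cap [a_i, b_i]$. Granted this, the difference $\cap_i E_i \setminus \cap_i G_i$ splits as a left tail $\cap_i E_i \cap (-\infty,A)$ together with a right tail $\cap_i E_i \cap (B, \infty)$. Choosing $i^\star$ with $a_{i^\star} = A$, the left tail is contained in $E_{i^\star} \cap (-\infty, a_{i^\star})$, a set of measure exactly $\alpha$; the right tail is bounded analogously by $\beta$. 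Summing gives the claimed inequality.

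For the equality clause, when each $E_i = [p_i, q_i]$ is a closed interval the truncation endpoints satisfy $a_i = p_i + \alpha$ and $b_i = q_i - \beta$; assuming nonempty intersection of truncations, $\cap_i E_i = [\max_i p_i, \min_i q_i]$ while $\cap_i G_i = [\max_i p_i + \alpha, \min_i q_i - \beta]$, and both tail estimates are sharp, so the inequality is an equality. I do not expect any real obstacle: the argument is a structural decomposition, and the only mild bookkeeping concerns the degenerate case $\alpha + \beta = |E_i|$, which the paper's conventions handle transparently without affecting the argument.
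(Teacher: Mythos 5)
Your argument is correct and is essentially the paper's own proof: the same identity $\bigcap_i E_i(\alpha,\beta)=(\bigcap_i E_i)\cap[A,B]$ with $A=\max_i a_i$, $B=\min_i b_i$, the same choice of an extremal index to bound each tail by $\alpha$ (resp.\ $\beta$), and the same endpoint computation for the interval case. Nothing to add.
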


This generalizes a key element underpinning the work of Burchard \cite{burchard}, which in turn
is related, but not identical, to
the construction employed by Riesz \cite{riesz}.\footnote{Riesz considers only the case of three sets,
truncates all three in this fashion, uses only the case $\alpha=\beta$, and works directly with
the integral over $\reals^2$ which defines $I(E_1,\cdots,E_n)$, rather than with one-dimensional integrals.}

\begin{proof} 
For each index $i$, let $a_i,b_i\in\reals$ respectively be the smallest and the largest real numbers satisfying
$|E_i\cap(-\infty,a_i]|=\alpha$ and $|E_i\cap[b_i,\infty)|=\beta$.
Thus $E_i = [a_i,b_i]$.
Let $a=\max_i a_i$ and $b=\min_i b_i$.  Then $\bigcap_i{E}_i(\alpha,\beta) = (\cap_i E_i)\cap [a,b]$.
It is given that $\bigcap_i{E}_i(\alpha,\beta)$ is nonempty, so $a\le b$.

Thus
$$ \int_\R\prod_{i=1}^k\one_{E_i(\alpha,\beta)}(y)\,dy =  |\cap_{i}{E}_i(\alpha,\beta)|= |(\cap_{i} E_i)\cap [a,b]|.$$
Therefore 
\begin{align*}
\int_\R\prod_{i=1}^k \one_{E_i}(y)\,dy -\int_\R\prod_{i=1}^k\one_{{E}_i(\alpha,\beta)}(y)\,dy 
&= |(\cap_{i}E_{i})\setminus[a,b]| 
\\&= |(\cap_{i}E_{i})\cap (-\infty,a)|+ |(\cap_{i}E_{i})\cap (b,\infty)|. \end{align*}
Choose $l$ such that $a_l=a$. Then $(\cap_{i}E_{i})\cap (-\infty,a)\subset E_l\cap(-\infty,a)$ and hence 
$$ \left|(\cap_{i}E_{i})\cap (-\infty,a)\right|\leq |E_l\cap(-\infty,a)| = \alpha.$$ 
Similarly $ |(\cap_{i}E_{i})\cap (b,\infty)|\leq \beta$. 

For the converse, suppose that the $E_i$ are closed intervals, and that $\cap_i E_i(\alpha,\beta)\ne\emptyset$. 
Then $\cap_i E_i(\alpha,\beta)= [a,b]$ where $a\le b$, as above.
In the same way, $\cap_i E_i = [a^\star,b^\star]$ where $a^\star$ is the 
maximum of the left endpoints of the intervals $E_i$, and $b^\star$
is the minimum of their right endpoints. Obviously $a^\star = a-\alpha$ and $b^\star=b+\beta$.
\end{proof}

The next lemma is evident.
\begin{lem} \label{lemma:evident}
Let $0\le \alpha,\beta<\infty$. 
Let $\set{I_k}$ be a collection of closed bounded
subintervals of $\reals$ satisfying $|I_k|\ge \alpha+\beta$.
Suppose that $\cap_k I_k(\alpha,\beta)\ne\emptyset$, and that $J$ is a closed subinterval of $\reals$ satisfying
$J(\alpha,\beta)\supset \cap_k I_k(\alpha,\beta)$. Then $J\supset \cap_k I_k$. 
\end{lem}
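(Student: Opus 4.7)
The proof plan is to reduce the assertion to simple bookkeeping about interval endpoints. Write $I_k = [a_k, b_k]$ and $J = [c,d]$. First I would observe that, since the hypothesis $\bigcap_k I_k(\alpha,\beta) \neq \emptyset$ together with $I_k(\alpha,\beta) \subset I_k$ forces $\bigcap_k I_k$ to be nonempty, and since $J(\alpha,\beta) \supset \bigcap_k I_k(\alpha,\beta) \neq \emptyset$ forces $|J| \ge \alpha + \beta$, all of the truncations involved are well-defined nonempty closed intervals (allowing the degenerate case of a single point when $|J| = \alpha + \beta$).

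Next I would compute all four intervals explicitly. By definition $I_k(\alpha,\beta) = [a_k + \alpha, b_k - \beta]$ and $J(\alpha,\beta) = [c + \alpha, d - \beta]$. Setting $a = \max_k(a_k + \alpha)$ and $b = \min_k(b_k - \beta)$, one has $\bigcap_k I_k(\alpha,\beta) = [a,b]$, nonempty by hypothesis, so $a \le b$. Likewise $\bigcap_k I_k = [\max_k a_k, \min_k b_k] = [a - \alpha, b + \beta]$.

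The hypothesis $J(\alpha,\beta) \supset \bigcap_k I_k(\alpha,\beta)$ becomes $[c + \alpha, d - \beta] \supset [a,b]$, which is equivalent to $c + \alpha \le a$ and $d - \beta \ge b$, i.e.\ $c \le a - \alpha$ and $d \ge b + \beta$. Therefore $J = [c,d] \supset [a - \alpha, b + \beta] = \bigcap_k I_k$, as claimed. No obstacle of substance arises; the only point requiring care is the degenerate case $|J| = \alpha + \beta$, in which $J(\alpha,\beta)$ degenerates to the single point $\{c + \alpha\} = \{d - \beta\}$, but the same endpoint inequalities go through verbatim under the paper's convention for truncation.
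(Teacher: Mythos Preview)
Your proof is correct and is precisely the endpoint computation one expects; the paper itself gives no proof, simply declaring the lemma ``evident.'' There is nothing to compare: your argument is the natural (and only reasonable) way to verify the statement.
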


\end{section}

\begin{section}{Deformation} 

We change notation: The number of sets $E_j$ will be $n+1$, and the index $j$ will run through $\set{0,1,\cdots,n}$.
The index $j=0$ will have a privileged role.

Consider a functional \[I(E_0,\cdots,E_n) = \int_{\reals^2}\prod_{j=0}^n \one_{E_j}(L_j(x))\,dx,\]
with $\set{L_j: 0\le j\le n}$ nondegenerate.  The invariance under changes of variables noted above,
together with this nondegeneracy, make it possible to bring
this functional into the form 
$$ I(E_0,\cdots,E_n)=c \int_{\R}\one_{E_0}(x)\int_{\R}\prod_{j=1}^{n}\one_{E_j}(y+t_jx)\,dy\,dx $$ 
where $c$ is a positive constant, and the $t_j$ are pairwise distinct. 
This is accomplished
by means of a linear change of variables in $\reals^2$ together with linear changes of variables in 
each of the spaces $\reals^1_j$ in which the sets $E_j$ lie. 
The sets $E_j$ which appear here are images of the original sets $E_j$ under invertible linear mappings of $\reals^1_j$,
but equality holds in the inequality \eqref{mainIQ} for this rewritten expression $I(E_0,\cdots,E_n)$
if and only if it holds for the original expression, and the property of admissibility is preserved.

With $I(E_0,\cdots,E_n)$ written in this form,
\begin{align*} S_0^\star&=\{(x,y)\in\R^2: |x|\le \tfrac12|E_0|\}  
\\ S_j^\star&=\{(x,y)\in\R^2: |y+t_jx|\le \tfrac12 |E_j| \} \text{  for $1\leq j\leq n$.}  \end{align*}
Let $\pi:\reals^2\to\reals^1$ be the projection $\pi(x,y)=x$.  Define
\begin{align*} &E_j(r) = E_j(\tfrac12 r,\tfrac12 r)\ \text{ for  $j\ge 1$ and $0<r \le |E_j|$}
\\&E_j(0)=E_j
\\ &E_0(r)\equiv E_0. \end{align*}
Thus $|E_j(r)| = |E_j|-r$ for $j\ge 1$.
Let $S_j^\star(r)$ be the associated strips; $S_0^\star(r)= S_0^\star$
while for $j\ge 1$, \[S_j^\star(r)=\set{(x,y)\in\reals^2: |y+t_j x|\le \tfrac12|E_j|-\tfrac12 r}\]
for $0\le r\le \min_j|E_j|$.
Thus if $j\ge 1$ and $r=|E_j|$ then  $S_j^\star(r)$ is a line in $\reals^2$.

The cases $n\ge 3$ of the next lemma will later be used to prove Theorem~\ref{thm:main} by induction on $n$.
\begin{lem} \label{Tok}
Let $n\ge 2$. 
Let $\set{E_j: 0\le j\le n}$ be a strictly admissible family of $n+1$
Lebesgue measurable subsets of $\reals^1$.
Then there exists $\bar r\in(0,\min_{1\le j\le n} |E_j|)$ such that 
\begin{align*}
&(E_j(\bar r): 0\le j\le n) \ \text{ is admissible} 
\\ &S_0^\star \supset \cap_{j\ge 1} S_j^\star(\bar r).
\end{align*}
\end{lem}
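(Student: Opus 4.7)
The plan is to locate $\bar r$ as the least value of $r$ for which the polygon $K(r):=\cap_{j\ge 1}S_j^\star(r)$ becomes contained in $S_0^\star$, and then to verify admissibility at that threshold.  After the affine normalization introduced above, we have $S_0^\star=\{(x,y):|x|\le\delta_0\}$ and $S_j^\star(r)=\{(x,y):|y+t_jx|\le\delta_j-r/2\}$ for $1\le j\le n$, with $\delta_j=|E_j|/2$ and the $t_j$ pairwise distinct.  The polygon $K(r)$ is a bounded symmetric convex set, and the function $M(r):=\sup\{|x|:(x,y)\in K(r)\}$ is continuous and non-increasing on $[0,\min_{j\ge 1}|E_j|)$, with $M(0)>\delta_0$ by strict admissibility applied at index $0$.

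The first task is to produce $r_0\in(0,\min_j|E_j|)$ with $M(r_0)\le\delta_0$.  I would argue by contradiction: if $M(r)>\delta_0$ throughout, then letting $r\to(2\delta_{j^\star})^-$ for $j^\star$ minimizing $\delta_j$ over $j\ge 1$, the strip $S_{j^\star}^\star(r)$ collapses to the line $y+t_{j^\star}x=0$, $K(r)$ concentrates on that line, and $\lim_rM(r)$ equals $\min_{j\ne j^\star,\,j\ge 1}(\delta_j-\delta_{j^\star})/|t_j-t_{j^\star}|$; this limit would therefore be at least $\delta_0$.  A $j^{\star\star}$ attaining this minimum then satisfies $\delta_{j^{\star\star}}\ge\delta_{j^\star}+\delta_0|t_{j^{\star\star}}-t_{j^\star}|$, contradicting strict admissibility at index $j^{\star\star}$ (which, tested against the corner of the parallelogram $S_0^\star\cap S_{j^\star}^\star$ where $|y+t_{j^{\star\star}}x|=\delta_{j^\star}+|t_{j^{\star\star}}-t_{j^\star}|\delta_0$, yields the reverse strict inequality).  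Defining $\bar r:=\inf\{r:M(r)\le\delta_0\}$, continuity of $M$ gives $M(\bar r)=\delta_0$, hence $S_0^\star\supset K(\bar r)$, and $\bar r\in(0,\min_j|E_j|)$.

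It remains to verify admissibility of $(E_j(\bar r))$.  Choose $(x_\star,y_\star)\in K(\bar r)$ with $|x_\star|=\delta_0$; such a point sits on the common boundary of two truncated strips $S_{j_1}^\star(\bar r),S_{j_2}^\star(\bar r)$.  Admissibility at index $0$ is immediate since $(x_\star,y_\star)\in K(\bar r)\cap\partial S_0^\star$ obstructs any open neighborhood of $K(\bar r)$ from fitting inside $S_0^\star$; admissibility at each $k\in\{j_1,j_2\}$ is equally immediate because $(x_\star,y_\star)$ lies in the slice $S_0^\star\cap\cap_{j\ne k,\,j\ge 1}S_j^\star(\bar r)$ and already on $\partial S_k^\star(\bar r)$, so the slice is not contained in the open interior of $S_k^\star(\bar r)$.

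The hard part will be verifying admissibility at indices $k\notin\{j_1,j_2\}$ with $k\ge 1$.  My plan here is to study the slack
$\psi_k(r):=\sup\{|y+t_kx|:(x,y)\in S_0^\star\cap\cap_{j\ne k,\,j\ge 1}S_j^\star(r)\}-(\delta_k-r/2)$
and show $\psi_k(\bar r)\ge 0$.  When the supremum is attained at a vertex involving the constraint $|x|=\delta_0$, both this supremum and $\delta_k-r/2$ decrease at rate $1/2$, so $\psi_k$ is constant equal to $\psi_k(0)>0$; the same holds when the extremal vertex joins two strips $S_{j'_1}^\star(r),S_{j'_2}^\star(r)$ with $t_k$ lying between $t_{j'_1}$ and $t_{j'_2}$.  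The genuinely delicate subcase is that in which $t_k$ lies outside this interval, for then $\psi_k$ can decrease linearly at a positive rate; one must then use the explicit form $\bar r=\delta_{j_1}+\delta_{j_2}-\delta_0|t_{j_1}-t_{j_2}|$ that falls out of Step 2 and exploit strict admissibility simultaneously at $k$ and at the extremal pair $(j_1,j_2)$ to control the decrease by the initial slack $\psi_k(0)$.  This combinatorial case analysis, tracking how the extremal vertex changes with $r$, is the principal obstacle to a clean proof.
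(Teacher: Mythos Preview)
Your definition of $\bar r$ (the first $r$ at which $K(r)\subset S_0^\star$) is equivalent to the paper's, but you have inverted which of the two conclusions is automatic and which requires work. The paper instead defines $\bar r$ as the infimum of those $r$ for which $(E_j(r))$ fails to be \emph{strictly} admissible. With that definition, admissibility at $\bar r$ is a one-line consequence of stability: failure of admissibility is an open condition (for tuples of sets with positive measures), so if it failed at $\bar r$ it would already fail at some $r<\bar r$, contradicting minimality. The substance of the paper's proof is then showing that the index at which strict admissibility breaks down must be $i=0$, which yields $S_0^\star\supset K(\bar r)$ and $\pi(K(\bar r))=E_0^*$.

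The tool that makes this work, and which is exactly the idea missing from your sketch, is the trivial ``untruncation'' observation (Lemma~\ref{lemma:evident} in the paper): if closed intervals $I_k$ have $\cap_k I_k(\alpha,\beta)\ne\emptyset$ and $J(\alpha,\beta)\supset\cap_k I_k(\alpha,\beta)$, then $J\supset\cap_k I_k$. Applied fiberwise over $x\in E_0^*$ with $\alpha=\beta=\bar r/2$, this says that if some $k\ge 1$ satisfied $S_k^\star(\bar r)\supset\cap_{j\ne k}S_j^\star(\bar r)$, then already $S_k^\star\supset\cap_{j\ne k}S_j^\star$ at $r=0$, contradicting strict admissibility. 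This single stroke disposes of every index $k\ge 1$ at once, with no case analysis of which constraints are active at the extremal vertex and no tracking of how that vertex moves. Your proposed slack function $\psi_k$ and the breakdown into subcases according to whether $t_k$ lies between $t_{j_1'}$ and $t_{j_2'}$ are therefore unnecessary; the ``principal obstacle'' you identify dissolves once you notice the untruncation lemma. (The same lemma would equally well patch your version of the argument, without changing your choice of $\bar r$.)
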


The second conclusion says in particular that $(E_j(\bar r): 0\le j\le n)$ fails to be strictly admissible.
Because admissibility is a property of the measures of sets only with no reference to their geometry,
Lemma~\ref{Tok} concerns deformations of intervals centered at $0$ and of associated strips, not of more general sets.

\begin{proof}
Define $\bar r$ to be the infimum of the set of all $r\in[0,\min_{k\ge 1} |E_k|]$ for which
$(E_j(r): 0\le j\le n)$ fails to be strictly admissible. 
If $r=\min_{k\ge 1{k\ge 1}} |E_k|=|E_i|$ then $|E_i(r)|=0$ and therefore
$(E_j(r): 0\le j\le n)$ is not strictly admissible. Thus $\bar r$ is defined as the infinmum of a nonempty set,
and $0\le \bar r\le \min_{k\ge 1}|E_k|$.

Since $(E_0,\cdots,E_n)=(E_0(0),\cdots,E_n(0))$ is strictly admissible,
and since strict admissibility is stable under small perturbations of the type under consideration, 
the $n+1$-tuple $(E_0(r),\cdots,E_n(r))$ is strictly admissible for all sufficiently small $r\ge 0$.
Therefore $\bar r>0$.


Consequently the definition of $\bar r$ implies one of two types of degeneracy: 
Either $|E_l^*(\bar r)|=0$ for some $l\ge 1$, or there exists $i\in\set{0,1,\cdots,n}$ such that
\begin{equation} \label{eq:badi}
\text{$S_i^\star(\bar r)\supset \cap_{j\ne i} S_j^\star(\bar r)$.} \end{equation}

\begin{clm}
The inclusion \eqref{eq:badi} must hold for at least one index $i\in\set{0,1,\cdots,n}$.  
\end{clm}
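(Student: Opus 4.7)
The paper has just observed that failure of strict admissibility at $\bar r$ implies at least one of two types of degeneracy: either $|E_l(\bar r)|=0$ for some $l\ge 1$, or some inclusion $S_i^\star(\bar r)\supset\bigcap_{j\ne i}S_j^\star(\bar r)$ holds. The claim asserts that the second type of degeneracy must always occur. The plan is to split according to whether the first type occurs: if no measure vanishes, the second type holds by the dichotomy, and there is nothing to prove; the real work is to show that the second type also holds whenever a measure vanishes.

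So suppose $|E_l(\bar r)|=0$ for some $l$. Since $E_0(r)\equiv E_0$ has positive measure, necessarily $l\ge 1$ and $\bar r=|E_l|$. The key observation is that $S_l^\star(\bar r)$ then collapses to the single line $L=\{(x,y):y+t_l x=0\}$ through the origin. For every $i\ne l$ the index $l$ appears among the constraints defining $\bigcap_{j\ne i}S_j^\star(\bar r)$, so that intersection is confined to $L$. I will parameterize $L$ by its $x$-coordinate; then $S_0^\star$ meets $L$ in a symmetric closed interval of half-length $\alpha_0=\tfrac12|E_0|$, and for $j\in\{1,\dots,n\}\setminus\{l\}$ the strip $S_j^\star(\bar r)$ meets $L$ in a symmetric closed interval of half-length $\alpha_j=(|E_j|-\bar r)/(2|t_j-t_l|)$. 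Both formulas are well-defined and yield nonnegative numbers: $t_j\ne t_l$ by nondegeneracy, and $|E_j|\ge |E_l|=\bar r$ because $\bar r\le\min_{k\ge 1}|E_k|$.

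In these one-dimensional coordinates, the inclusion $S_i^\star(\bar r)\supset\bigcap_{j\ne i}S_j^\star(\bar r)$ for any $i\ne l$ reduces to the single numerical inequality $\min_{j\ne i,\,l}\alpha_j\le\alpha_i$. I would then close with a short pigeonhole argument: if this inequality failed for every $i\ne l$, then one would have $\alpha_i<\alpha_j$ for every ordered pair of distinct indices $i,j\in\{0,1,\dots,n\}\setminus\{l\}$; this index set has cardinality $n\ge 2$, so reversing the roles of $i$ and $j$ in any such pair gives an immediate contradiction.

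The main obstacle is precisely the geometric content of the substantive case: one must recognize that as soon as a single strip $S_l^\star$ degenerates to the line $L$, the relevant geometry collapses entirely onto $L$, turning what were two-dimensional containment questions into a one-dimensional comparison of half-lengths. Once that reduction is in place, the necessity of $l\ne 0$, the nondegeneracy needed to define the $\alpha_j$, and the final pigeonhole step are all essentially bookkeeping.
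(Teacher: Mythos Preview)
Your argument is correct and follows essentially the same route as the paper. Both recognize that once $S_l^\star(\bar r)$ collapses to the line $\mathcal{L}=\{y+t_l x=0\}$, the containment question for any $i\ne l$ becomes a comparison of the symmetric intervals $S_j^\star(\bar r)\cap\mathcal{L}$; the paper then simply picks $i\ne l$ with $\alpha_i$ maximal, whereas you reach the same conclusion by the equivalent pigeonhole contradiction.
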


\begin{proof}
If not, then the other alternative must hold; 
there exists an index $l$ such that $|E_l^*(\bar r)|=0$. In that case, $S_l^\star(\bar r)$
is by definition equal to the line $\set{(x,y): y+t_lx=0}$, which contains $0$. 
For each index $j\ne l$, the intersection of $S_j^\star(\bar r)$ with $\scriptl$ is a nonempty closed interval
of finite nonnegative length, centered at $0$.
Choose $i\ne l$ for which the length of $S_i^\star(\bar r)\cap\scriptl$
is maximal. Then $S_i^\star(\bar r)$ contains $S_i^\star(\bar r)\cap\scriptl$, which in turn
contains $S_j^\star(\bar r)\cap\scriptl$ for every $j\notin\set{i,l}$. Therefore \eqref{eq:badi} holds for this index $i$.
\end{proof}

Let \[K = \cap_{j=1}^n S_j^\star(\bar r),\] 
which is a nonempty balanced convex subset of $\reals^2$. 
$K$ is compact, by the nondegeneracy hypothesis, since $E_j^*$ are compact intervals.

$\pi(K)\subset\reals$ is a compact interval centered at $0$, as is $E_0^*$. Therefore\footnote{This 
apparently innocuous step is responsible for the restriction $m=2$ in our main theorem.}
$\pi(K)\subset E_0^*$, or $E_0^*\subset\pi(K)$.

\begin{clm} 
If $\pi(K)\supset E_0^*$ and if an index $i$ satisfies \eqref{eq:badi}, then $i=0$.  \end{clm}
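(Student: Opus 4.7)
I argue by contradiction. Suppose some $i\ge 1$ satisfies \eqref{eq:badi} at $\bar r$, so $S_0^\star\cap K_i(\bar r)\subset S_i^\star(\bar r)$, where $K_i(r):=\bigcap_{j\ne i,\,j\ge 1}S_j^\star(r)$. Set $M_i(r):=\max_{(x,y)\in S_0^\star\cap K_i(r)}(y+t_i x)$ and $c_i(r):=\tfrac12(|E_i|-r)$. By central symmetry, \eqref{eq:badi} for $i$ at $r$ is equivalent to $M_i(r)\le c_i(r)$; strict admissibility on $[0,\bar r)$ gives $M_i(r)>c_i(r)$ there, while continuity and the minimality of $\bar r$ force $M_i(\bar r)=c_i(\bar r)$.

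The maximum defining $M_i(\bar r)$ is attained at a vertex $V^*$ of the convex polygon $S_0^\star\cap K_i(\bar r)$, which either lies on $\partial S_0^\star$ or is a vertex of $K_i(\bar r)$ strictly inside $S_0^\star$. In the boundary case, $V^*$ is the intersection of $\partial S_0^\star$ with a single strip boundary $\partial S_j^\star(\bar r)$ for some $j\ne i$, $j\ge 1$; since $\partial S_0^\star$ is fixed as $r$ varies while $\partial S_j^\star$ translates outward at rate $\tfrac12$, a direct perturbation gives $dM_i/d(-r)=\tfrac12=dc_i/d(-r)$. Thus $M_i-c_i$ is locally constant near $\bar r$ and vanishes for $r$ just below $\bar r$, contradicting strict admissibility.

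In the interior case, $V^*$ is the intersection $\partial S_j^\star\cap\partial S_k^\star$ for some $j,k\ne i$, $j,k\ge 1$, with outward normals $\epsilon_j(t_j,1)$ and $\epsilon_k(t_k,1)$, $\epsilon_j,\epsilon_k\in\{\pm 1\}$. Writing $(t_i,1)=\alpha\epsilon_j(t_j,1)+\beta\epsilon_k(t_k,1)$ with $\alpha,\beta\ge 0$ to express that $(t_i,1)$ lies in the normal cone at $V^*$, one computes $dM_i/d(-r)=(\alpha+\beta)/2$. The constraints $\alpha\epsilon_j+\beta\epsilon_k=1$ and $\alpha,\beta\ge 0$ yield $\alpha+\beta\ge 1$, with equality only when $\epsilon_j=\epsilon_k=+1$, in which case the boundary-case argument again gives a contradiction. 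Otherwise $\alpha+\beta>1$ and necessarily $\epsilon_j\epsilon_k=-1$; a short check then places $(1,0)$ in the normal cone at $V^*$, so $V^*$ also extremizes $\pm x$ over $K_i(\bar r)$. Since \eqref{eq:badi} gives $K\cap S_0^\star=K_i(\bar r)\cap S_0^\star$, the hypothesis $\pi(K)\supset E_0^*$ yields $\pi(K_i(\bar r))\supset E_0^*$, forcing $|x_{V^*}|\ge\tfrac12|E_0|$ and placing $V^*$ on $\partial S_0^\star$, contradicting the interior assumption. Hence $i=0$.

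\textbf{The hardest step} is the interior case, where the purely local rate argument (sufficient in the equal-signs subcase) must be combined with the global geometric transfer $\pi(K)\supset E_0^*\Rightarrow\pi(K_i(\bar r))\supset E_0^*$ (valid via \eqref{eq:badi}) to rule out the opposite-signs subcase.
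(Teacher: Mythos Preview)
Your approach is quite different from the paper's, and considerably more elaborate; unfortunately it also has a genuine gap in the step you label as routine.

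\medskip
\textbf{The paper's argument.} The paper does not perturb $r$ at all. It slices: for each fixed $x\in E_0^*$ (which is allowed precisely because $\pi(K)\supset E_0^*$ guarantees the fibre $\cap_{j\ge 1}J(x,j,\bar r)$ is nonempty), the inclusion \eqref{eq:badi} reads $J(x,i,\bar r)\supset\cap_{j\ne i,\,j\ge 1}J(x,j,\bar r)$. Since each $J(x,j,\bar r)$ is just the interval $J(x,j,0)$ truncated by $\bar r/2$ on each side, Lemma~\ref{lemma:evident} immediately gives $J(x,i,0)\supset\cap_{j\ne i,\,j\ge 1}J(x,j,0)$. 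Taking the union over $x\in E_0^*$ yields $S_i^\star\supset\cap_{0\le j\ne i}S_j^\star$, contradicting strict admissibility of the \emph{original} tuple. No case split, no derivatives.

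\medskip
\textbf{The gap in your argument.} In the boundary case (and again in the equal-signs interior subcase) you assert $dM_i/d(-r)=\tfrac12$ and conclude that $M_i-c_i$ vanishes for $r$ just below $\bar r$. But what you actually compute is the rate of change of $y+t_ix$ \emph{along the tracked vertex} $V^*(r)$. This gives only $M_i(r)\ge (y_{V^*(r)}+t_ix_{V^*(r)})=c_i(r)$, i.e.\ the inequality in the wrong direction; it is consistent with $M_i(r)>c_i(r)$ and hence with strict admissibility. To obtain the contradiction you need $M_i(r)\le c_i(r)$, which requires showing that the tracked vertex remains the \emph{maximizer} as $r$ decreases. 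You do not argue this, and in non-generic configurations (more than two constraints active at $V^*(\bar r)$, some of them lower boundaries) the normal cone at $V^*(r)$ genuinely shrinks for $r<\bar r$, so the tracked vertex can cease to be optimal. Notably, you invoke the hypothesis $\pi(K)\supset E_0^*$ only in the opposite-signs subcase, yet it is exactly this hypothesis that closes the gap globally: it forces the feasible $x$-range for the inner maximization to be all of $E_0^*$ for every $r\le\bar r$, whence
\[
M_i(r)=\max_{x\in E_0^*}\Big[\min_{j\ne i,\,j\ge1}(c_j(0)-t_jx)+t_ix\Big]-\tfrac{r}{2}
\]
and $M_i-c_i$ is constant on $[0,\bar r]$. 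Once you write this down, the vertex tracking and the three-way case split become unnecessary, and you have recovered the paper's argument.
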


\begin{proof}
Suppose that $\pi(K)\supset E_0^*$ and that $i\ne 0$ satisfies \eqref{eq:badi}.
For $1\le j\le n$ define the closed intervals 
\begin{equation} J(x,j,r) = \set{y\in\reals^1: (x,y)\in S_j^\star(r)}\subset\reals^1.\end{equation} 
For any $x\in\pi(K)$, these intervals have at least one point in common. 
Since $S_i^\star(\bar r)\supset \cap_{j\ne i} S_j^\star(\bar r)$, 
\[J(x,i,\bar r)\supset \cap_{j\ne i} J(x,j,\bar r)\ \text{ for any } x\in E_0^*.\]
Therefore by Lemma~\ref{lemma:evident}, 
\begin{equation} J(x,i,0)\supset \cap_{1\le j\ne i} J(x,j,0)\ \text{ for all $x\in E_0^*$.}\end{equation}


Since $S_0^\star=\pi^{-1}(E_0^*)$ it then follows that
\[
S_i^\star \supset 
S_i^\star\cap \pi^{-1}(E_0^*) \supset  \cap_{1\le j\ne i} S_j^\star\cap \pi^{-1}(E_0^*) 
= \cap_{0\le j\ne i} S_j^\star,\]
contradicting the hypothesis that $(E_0,\cdots,E_n)$ is strictly admissible.
\end{proof}


\begin{clm}
$\pi(K)$ cannot properly contain $E_0^*$. \end{clm}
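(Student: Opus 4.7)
The plan is to deduce this from the two preceding claims via a short contradiction argument. Assume toward contradiction that $E_0^*\subsetneq \pi(K)$; in particular $\pi(K)\supset E_0^*$, placing us in the hypothesis of the immediately preceding claim.

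By the first claim established within this proof of Lemma~\ref{Tok}, the inclusion \eqref{eq:badi} holds for at least one index $i\in\set{0,1,\cdots,n}$ at the truncation parameter $\bar r$. By the second claim, the containment $\pi(K)\supset E_0^*$ forces any such $i$ to equal $0$. Consequently
\[S_0^\star=S_0^\star(\bar r)\supset\bigcap_{j=1}^n S_j^\star(\bar r)=K,\]
and applying $\pi$ together with $S_0^\star=\pi^{-1}(E_0^*)$ yields $\pi(K)\subset E_0^*$, contradicting the strictness of the assumed inclusion.

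I anticipate no genuine obstacle: the substantive work has already been carried out in the two preceding claims, and the present claim is the bookkeeping step that assembles them. The only point to watch is that the earlier dichotomy $\pi(K)\subset E_0^*$ or $E_0^*\subset\pi(K)$ is \emph{nonstrict}, so the case $\pi(K)=E_0^*$ remains compatible with the lemma; it is only the proper containment that needs to be excluded. Once this claim is in hand, one concludes $\pi(K)\subset E_0^*$, equivalently $S_0^\star\supset K=\bigcap_{j\ge 1}S_j^\star(\bar r)$, which is precisely the second bullet in the conclusion of Lemma~\ref{Tok}; admissibility of $(E_j(\bar r))$ then follows from the defining minimality of $\bar r$ combined with the continuity already used to rule out $\bar r=0$.
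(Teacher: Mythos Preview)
Your proof is correct and follows the same approach as the paper: both invoke the two preceding claims to force $i=0$ in \eqref{eq:badi}, then observe that $K\subset S_0^\star=\pi^{-1}(E_0^*)$ is incompatible with $E_0^*\subsetneq\pi(K)$. The paper phrases the final step by exhibiting a point $(x,y)\in K\setminus S_0^\star$ with $x\in\pi(K)\setminus E_0^*$, while you apply $\pi$ to the inclusion $K\subset\pi^{-1}(E_0^*)$; these are the same observation.
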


\begin{proof}
Suppose that $\pi(K)$ properly contains $E_0^*$. By the preceding Claim, \eqref{eq:badi} holds for $i=0$.
Let $x\in \pi(K)\setminus E_0^*$.  There exists $y\in\reals$ such that $(x,y)\in K$.
Since $x\notin E_0^*$, $(x,y)\notin S_0^\star =\pi^{-1}(E_0^*)$. 
Therefore $K=\cap_{j\ge 1} S_j^\star(\bar r)$ is not contained in $S_0^\star = S_0^\star(\bar r)$,
contradicting \eqref{eq:badi}.
\end{proof}

\begin{clm} $\pi(K)$ is not properly contained in $E_0^*$.  \end{clm}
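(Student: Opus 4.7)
The plan is to derive a contradiction from the supposition $\pi(K)\subsetneq E_0^*$ by producing, via continuity, some $r<\bar r$ at which the same proper inclusion already holds, violating the definition of $\bar r$ as the infimum of values at which strict admissibility fails.

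Write $K(r)=\bigcap_{j=1}^n S_j^\star(r)$ for $r\in[0,\min_{j\ge 1}|E_j|]$, so $K=K(\bar r)$. Each $K(r)$ is a compact, convex, origin-symmetric subset of $\reals^2$, hence $\pi(K(r))=[-a(r),a(r)]$ for some $a(r)\ge 0$. A first step is to observe that $r\mapsto a(r)$ is continuous and non-increasing. Monotonicity is clear, since each strip $S_j^\star(r)$ shrinks as $r$ increases. Continuity follows from the Hausdorff continuity of the strips together with nondegeneracy of $(L_j)$, which keeps $K(r)$ uniformly bounded on $[0,\bar r]$; alternatively one can maximize the first coordinate over $K(r)$ directly to exhibit $a(r)$ as a minimum of finitely many non-increasing affine functions of $r$.

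At $r=0$, strict admissibility of the original $(n+1)$-tuple applied to index $0$ gives $S_0^\star\not\supset K(0)$, equivalently $a(0)>|E_0|/2$. The contrary supposition $\pi(K)\subsetneq E_0^*$ means $a(\bar r)<|E_0|/2$. Setting $r_0=\inf\set{r\in[0,\bar r]: a(r)<|E_0|/2}$, continuity of $a$ yields $a(r_0)=|E_0|/2$, and $a(\bar r)<|E_0|/2$ forces $r_0<\bar r$. By definition of the infimum, any right-neighborhood of $r_0$ contains $r'$ with $a(r')<|E_0|/2$; shrinking if necessary, we obtain such an $r'\in(r_0,\bar r)\subset(0,\bar r)$.

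At this $r'$, $\pi(K(r'))=[-a(r'),a(r')]$ is properly contained in $E_0^*$, so $S_0^\star\supset K(r')=\bigcap_{j\ge 1}S_j^\star(r')$. This is exactly the failure of strict admissibility of $(E_j(r'):0\le j\le n)$ at index $i=0$. Since $r'<\bar r$, this contradicts the definition of $\bar r$. The only substantive ingredient is the continuity of $a$, which is entirely elementary in this planar setting; I expect no genuine obstacle in carrying out the argument.
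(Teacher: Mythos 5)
Your argument is correct and follows essentially the same route as the paper: both derive a contradiction with the minimality of $\bar r$ by showing that the containment $\bigcap_{j\ge 1}S_j^\star(r')\subset S_0^\star$ (failure of strict admissibility at index $0$) already holds for some $r'<\bar r$, via a continuity/stability argument. Your width function $a(r)$ (a minimum of finitely many decreasing affine functions of $r$, namely $\min_{j\ne k}\tfrac{|E_j|+|E_k|-2r}{2|t_j-t_k|}$) simply makes explicit the continuity that the paper invokes when it passes from ``$K$ is contained in the interior of $S_0^\star$'' to containment for all $r'<\bar r$ sufficiently close to $\bar r$.
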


\begin{proof}
If $\pi(K)$ is properly contained in $E_0^*$, then it is contained in the interior of $E_0^*$, 
since each of these sets is a closed interval centered at $0$.
Consequently $K$
is contained in the interior of 
$\pi^{-1}(E_0^*)=S_0^\star=S_0^\star(\bar r)$;
that is, $\cap_{j\ge 1} S_j^\star(\bar r)$ is contained in the interior of $S_0^\star$.
Therefore for every $r'<\bar r$ sufficiently close to $\bar r$, 
$\cap_{j\ge 1} S_j^\star(r')$ is contained in $S_0^\star$.
Thus $(E_0(r'),\cdots,E_n(r'))$ fails to be strictly admissible.
This contradicts the definition of $\bar r$ as the infimum of the set of all $r$
for which $(E_0(r),\cdots,E_n(r))$ fails to be strictly admissible.
\end{proof}

Combining the above four claims, we conclude that \eqref{eq:badi} holds for $i=0$ and for no other
index, and that $\pi(K)=E_0^*$.

\begin{clm} $|E_j(\bar r)|>0$ for every index $j\in\set{0,1,\cdots,n}$.  \end{clm}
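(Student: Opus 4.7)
The $j=0$ case is trivial because $E_0(\bar r)\equiv E_0$, so I would focus on $j\ge 1$ and aim to show $\bar r<|E_j|$ for every such $j$. My plan is to argue by contradiction: suppose $\bar r=|E_l|$ for some $l\in\set{1,\dots,n}$, so that $|E_l(\bar r)|=0$, and extract from this a second index $i\ne 0$ satisfying \eqref{eq:badi}, directly contradicting the uniqueness of $i=0$ that has already been established by the four preceding claims.

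Under this assumption $E_l^*(\bar r)=\set{0}$, so $S_l^\star(\bar r)$ degenerates to the single line $\scriptl=\set{(x,y):y+t_l x=0}$, and therefore $K=\cap_{j=1}^n S_j^\star(\bar r)$ is forced to lie on $\scriptl$. Parametrizing $\scriptl$ by $x$, the point $(x,-t_l x)$ belongs to $S_j^\star(\bar r)$ (for $j\ne l$, $j\ge 1$) precisely when $|x|\le |E_j(\bar r)|/(2|t_j-t_l|)$, so I would read off the explicit description
\[\pi(K)=[-M,M]\quad\text{where }\ M=\min_{1\le j\le n,\ j\ne l}\frac{|E_j(\bar r)|}{2|t_j-t_l|}.\]
Since the preceding claims give $\pi(K)=E_0^*$, we have $M=|E_0|/2$, and because the minimum is taken over a finite set it is attained at some index $m\ne l$, yielding $|E_m(\bar r)|=|t_m-t_l|\cdot|E_0|$.

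The next step is to verify that \eqref{eq:badi} holds for this particular $m$. Since $l\ne m$, the intersection $\cap_{j\ne m}S_j^\star(\bar r)$ is contained in $S_l^\star(\bar r)\cap S_0^\star = \scriptl\cap S_0^\star = \set{(x,-t_l x):|x|\le |E_0|/2}$. On that segment a one-line computation gives $|y+t_m x|=|t_m-t_l|\cdot|x|\le|t_m-t_l|\cdot|E_0|/2=|E_m(\bar r)|/2$, so the segment sits inside $S_m^\star(\bar r)$. Hence $S_m^\star(\bar r)\supset \cap_{j\ne m}S_j^\star(\bar r)$, i.e.~\eqref{eq:badi} holds for $i=m\ne 0$, the desired contradiction.

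The main obstacle will be conceptual rather than computational: to notice that the rigid equality $\pi(K)=E_0^*$ is exactly what pins down a second extremal index, forcing the bounding strip $S_m^\star(\bar r)$ to contain the one-dimensional set $K$ along its full length. Once that extremal $m$ has been located by the minimization above, the verification of \eqref{eq:badi} for $m$ is essentially a single estimate on $\scriptl$.
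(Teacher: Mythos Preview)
Your argument is correct and follows essentially the same route as the paper: assume some $|E_l(\bar r)|=0$, so $S_l^\star(\bar r)$ collapses to the line $\scriptl$, use the established identity $\pi(K)=E_0^*$ to pin $K=\scriptl\cap S_0^\star$, and then read off an index $i\ne 0$ satisfying \eqref{eq:badi}, contradicting the preceding claims. The only cosmetic difference is that you isolate a single extremal index $m$ via the explicit minimization $M=\min_{j\ne l}\tfrac{|E_j(\bar r)|}{2|t_j-t_l|}$, whereas the paper observes more abstractly that $K$ is the smallest of the nested intervals $S_j^\star(\bar r)\cap\scriptl$ and concludes that \emph{every} $i\notin\{0,l\}$ satisfies \eqref{eq:badi}; your inequality $M\le |E_j(\bar r)|/(2|t_j-t_l|)$ for all $j$ in fact yields the same stronger conclusion if you want it.
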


\begin{proof}
If $|E_l(\bar r)|=0$ then since $E_0(\bar r)=E_0$, the index $l$ cannot equal $0$.
$S_l^\star(\bar r)$ is the line $\scriptl=\set{(x,y): y+t_lx=0}$.
For each $j\ne l$, $S_j^\star(\bar r)\cap\scriptl$ is a closed subinterval of $\scriptl$ centered at $0$.
Therefore $K$ is equal to the smallest of these subintervals.

Since $\pi(K)=E_0^*$, and since $\pi:\scriptl\to\reals$ is injective,
$K$ must equal $\scriptl\cap S_0^\star = S_l^\star(\bar r)\cap S_0^\star$.
Therefore $S_j^\star(\bar r)\cap\scriptl\supset S_0^\star(\bar r)\cap\scriptl$.
Therefore every $i\notin\set{0,l}$ satisfies \eqref{eq:badi}.
Since $n\ge 2$ there are at least three indices $0\le i\le n$, so there exists at least one index $i\notin\set{0,l}$.
But we have shown that the only such index is $i=0$, so this is a contradiction.
\end{proof}


\medskip
To conclude the proof of Lemma~\ref{Tok}, it remains to show that
$(E_0(\bar r),\cdots,E_n(\bar r))$ must be admissible.
We have shown that $|E_j(\bar r)|>0$ for all $j$.
The failure of admissibility is a stable property for sets with  positive measures, 
so if $(E_0(\bar r),\cdots,E_n(\bar r))$ were not admissible then there would exist $0<r<\bar r$ for which
$(E_0(r),\cdots,E_n(r))$ was not admissible, contradicting the minimality of $\bar r$.
\end{proof}



\end{section}

\begin{section}{Conclusion of the Proof} 

The proof of Theorem~\ref{thm:main} proceeds by induction on 
the degree of multilinearity of the form $I$, that is, on the number of sets appearing in
$I(E_1,\cdots,E_n)$.
The base case $n=3$ is a restatement of the one-dimensional case of Burchard's theorem, 
in its invariant form, since the two definitions of admissibility are equivalent.

Assuming that the result holds for expressions involving $n$ sets $E_j$, we will prove it
for expressions involving $n+1$ sets. 
Let $(E_0,\cdots,E_n)$ be any admissible $n+1$--tuple of sets satisfying $I(E_0,\cdots,E_n)=I(E_0^*,\cdots,E_n^*)$.

Consider first the case in which $(E_j: 0\le j\le n)$ is not strictly admissible.
Then there exists $i$ such that $S_i^\star\supset \cap_{j\ne i} S_j^\star$. By permuting the indices,
we may assume without loss of generality that $i=0$. 
Then
\begin{align*} I(E_0,\cdots,E_n) \le I(\reals,E_1,\cdots,E_n) \le I(\reals,E_1^*,\cdots,E_n^*) 
= I(E_0^*,\cdots,E_n^*),
\end{align*}
so $I(\reals,E_1,\cdots,E_n) = I(\reals,E_1^*,\cdots,E_n^*)$.

Defining \[J(E_1,\cdots,E_n) = I(\reals,E_1,\cdots,E_n),\] we have  
$J(E_1,\cdots,E_n) = J(E_1^*,\cdots,E_n^*)$.
Now $(E_1,\cdots,E_n)$ is admissible relative to $\set{L_j: 1\le j\le n}$.
For if not, then there would exist $k\in\set{1,2,\cdots,n}$
for which $S_k^\star$ properly contained $\cap_{1\le j\ne k} S_j^\star$. 
Since $S_0^\star \supset \cap_{j\ge 1} S_j^\star$,
\[\cap_{1\le j\ne k} S_j^\star = S_0^\star\cap(\cap_{1\le j\ne k} S_j^\star). \]
so $S_k^\star$ would properly contain $\cap_{0\le j\ne k} S_j^\star$,
contradicting the hypothesis that $(E_0,\cdots,E_n)$ is admissible.

By the induction hypothesis, equality in the rearrangement inequality for $J$ can occur only if
$E_j$ differs from an interval by a null set, for each $j\ge 1$.
Moreover, there must exist a point $z\in\reals^2$ such that for every $j\in\set{1,2,\cdots,n}$, 
$L_j(z)$ equals the center of the interval corresponding to $E_j$.

For $j\ge 1$, replace $E_j$ by the unique closed interval which differs from $E_j$ by a null set.
By an affine change of variables in $\reals^2$, we can write $I(E_0,\cdots,E_n)$ in the form
\begin{equation} \label{Iconvenientform} c \int \one_{E_0}(x)\int \prod_{j=1}^n \one_{E_j}(y+t_jx)\,dy\,dx\end{equation}
where $c\in(0,\infty)$ and $t_j\in\reals$, and now for each $j\ge 1$,
$E_j$ is an interval centered at $0$. 
The inner integral defines a nonnegative function $F$ of $x\in\reals$
which is continuous, nonincreasing on $[0,\infty)$, even,
and has support equal to a certain closed bounded interval centered at $0$.
The condition that $(E_0,\cdots,E_n)$ is admissible but $S_0^\star\supset\cap_{j=1}^n S_j^\star$
means that this support is equal to the closed interval $E_0^*$. 
Among sets $E$ satisfying $|E|=|E_0|$,
$\int_E F<\int_\reals F$ unless $E$ differs from $E_0^*$ by a null set. 
We have thus shown that in any case of nonstrict admissibility, 
all the sets $E_j$ differ from intervals by null sets, and the centers $c_j$ of these intervals 
are coherently situated, in the sense that $c_j=L_j(z)$ for a common point $z\in\reals^2$.

Next consider the case in which $(E_0,\cdots,E_n)$ is strictly admissible.
Change variables to put $I(E_0,\cdots,E_n)$ into the form \eqref{Iconvenientform}.
This replaces the sets $E_j$ by their images under certain invertible linear transformations,
but does not affect the validity of the two conclusions of the theorem.

Let $\bar r$ be as specified in Lemma~\ref{Tok}.
Set $\tilde E_j = E_j(\bar r)$, and recall that $\tilde E_0=E_0$.
Let $\tilde S_j^\star$ be the strips in $\reals^2$ associated to the rearrangements $\tilde E_j^*$.
By Lemma~\ref{RL}, 
\[\int_{\R}\prod_{j=1}^n\one_{E_j}(y+t_jx)dy \leq \bar r + \int_{\R}\prod_{j=1}^n\one_{\tilde{E_j}}(y+t_jx)\,dy\]
for each $x\in E_0$.
Multiplying both sides by $\one_{E_0}(x)$ and integrating with respect to $x$ gives 
\[\int_{\R}\one_{E_0}(x)\int_{\R}\prod_{i=1}^n\one_{E_j}(y+t_jx)\,dy\,dx  
\leq \bar r|E_0|+\int_{\R}\one_{E_0}(x) \int_{\R}\prod_{i=1}^n\one_{\tilde{E_j}}(y+t_jx)\,dy\,dx.\]

Thus 
\begin{equation} \label{Et1} 
	I(E_0,\ldots,E_n)\leq \bar r|E_0|+I(E_0,\tilde{E}_1,\ldots,\tilde{E}_n). 
\end{equation} 
By the general rearrangement inequality applied to the $n+1$--tuple $(E_0,E_1,\dots,E_n)$,
\begin{equation} \label{Et2} 
 \bar r|E_0|+I(E_0,\tilde{E}_1,\ldots,\tilde{E}_n) \leq \bar r|E_0|+I(E_0^*,\tilde{E}_1^*,\ldots,\tilde{E}_n^*).  
\end{equation} 

Since $(\tilde E_j: 0\le j\le n)$ is admissible,
for each $x\in E_0$ there exists $y$ such that $(x,y)\in \cap_{j\ge 1} \tilde S_j^\star$.
Therefore by the second conclusion of Lemma~\ref{RL},
\[\int_{\R}\prod_{i=1}^n\one_{E_j^*}(y+t_jx)\,dy = \bar r+ \int_{\R}\prod_{i=1}^n\one_{\tilde E_j^*}(y+t_jx)\,dy.\]
Integrating both sides of this inequality with respect to $x\in E_0^*$ gives
\begin{equation} \label{Et3} 
	I(E_0^*,E_1^*,\ldots,E_n^*)=\bar r|E_0^*|+I(E_0^*,\tilde{E}_1^*,\ldots,\tilde{E}_n^*) .
\end{equation} 

Combining \eqref{Et1}, \eqref{Et2}, and \eqref{Et3} yields 
\begin{multline*}
I(E_0,\ldots,E_n)\leq \bar r|E_0|+I(E_0,\tilde{E}_1,\ldots,\tilde{E}_n) 
\leq \\ \bar r|E_0|+I(E_0^*,\tilde{E}_1^*,\ldots,\tilde{E}_n^*) 
=I(E_0^*,E_1^*,\ldots,E_n^*)
\end{multline*}  
We are assuming that
$I(E_0,E_1,\ldots,E_n)=I(E_0^*,\tilde{E}_1^*,\ldots,\tilde{E}_n^*)$, 
so equality holds in each inequality in this chain. Hence
\[ I(E_0,\tilde{E}_1,\ldots,\tilde{E}_n) =I(E_0^*,\tilde{E}_1^*,\ldots,\tilde{E}_n^*). \] 

Thus the $n+1$--tuple $(E_0,\tilde E_1,\cdots,\tilde E_n)$ is admissible but not strictly admissible,
and achieves equality in the inequality \eqref{mainIQ}. This situation was analyzed above.
Therefore we conclude that $E_0$ coincides with an interval, up to a null set.

The same reasoning can be applied to $E_j$ for all $j$, by permuting the indices,
so each of the sets $E_j$ is an interval up to a null set. 
In this case (returning to the above discussion in which the index $j=0$
is singled out), each interval $E_j$ has the same center as $E_j(\bar r)$.
The discussion above has established that the centers of the intervals $E_j(\bar r)$ are coherently situated. 
\end{section}


\begin{thebibliography}{20}

\bibitem{BLL}
H.~J.~Brascamp, E.~H.~Lieb, and J.~M.~Luttinger,
{\em A general rearrangement inequality for multiple integrals}, 
J. Functional Analysis 17 (1974), 227--237

\bibitem{burchardthesis} A.~Burchard,
{\em Cases of equality in the Riesz rearrangement inequality}, 
Thesis (Ph.D.) Georgia Institute of Technology. 1994. 94 pp, ProQuest LLC

\bibitem{burchard} \bysame,
{\em Cases of equality in the Riesz rearrangement inequality}, 
Ann. of Math. (2) 143 (1996), no. 3, 499--527

\bibitem{christradon} M.~Christ, 
{\em Extremizers of a Radon transform inequality}, preprint  math.CA arXiv:1106.0719,
to appear in proceedings of Princeton symposium in honor of E.~M.~Stein 

\bibitem{christRS1} \bysame,
{\em An approximate inverse Riesz-Sobolev rearrangement inequality}, preprint, math.CA arXiv:1112.3715

\bibitem{christRS2} \bysame,
{\em Near equality in the Riesz-Sobolev inequality}, in preparation.


\bibitem{flockkplane}
T.~Flock, 
{\em Uniqueness of extremizers for an endpoint inequality of the $k$-plane transform}, 
preprint, math.CA arXiv:1307.6551 

\bibitem{riesz}
F.~Riesz,
{\em Sur une in\'egalit\'e int\'egrale}, Journal London Math. Soc. 5 (1930), 162--168

\end{thebibliography}
\end{document}